\documentclass[a4paper,12pt]{article}
\usepackage{ucs}
\usepackage{amsfonts, amssymb, amsmath, amsthm, amscd}
\usepackage{graphicx}
\usepackage{cite}
\textheight=220mm \textwidth=150mm \topmargin=-0.5in
\oddsidemargin=0in \evensidemargin=0in

\ifx\undefined \pdfgentounicode \else
\input{glyphtounicode} \pdfgentounicode=1
\fi

\author{A.A. Vasil'eva\footnote{Faculty of Mechanics and Mathematics, Lomonosov Moscow State University; Moscow Center for Fundamental and Applied Mathematics.}}
\title{Kolmogorov widths of an intersection of anisotropic finite-dimensional balls in $l_q^k$ for $1\le q\le 2$}
\date{}
\begin{document}

\maketitle

\newenvironment{Biblio}{%
                  \renewcommand{\refname}{\footnotesize REFERENCES}%
                  }

\def\inff{\mathop{\smash\inf\vphantom\sup}}
\renewcommand{\le}{\leqslant}
\renewcommand{\ge}{\geqslant}
\newcommand{\sgn}{\mathrm {sgn}\,}
\newcommand{\inter}{\mathrm {int}\,}
\newcommand{\dist}{\mathrm {dist}}
\newcommand{\supp}{\mathrm {supp}\,}
\newcommand{\R}{\mathbb{R}}
\newcommand{\Z}{\mathbb{Z}}
\newcommand{\N}{\mathbb{N}}
\newcommand{\Q}{\mathbb{Q}}
\theoremstyle{plain}
\newtheorem{Trm}{Theorem}
\newtheorem{trma}{Theorem}
\newtheorem{Def}{Definition}
\newtheorem{Cor}{Corollary}
\newtheorem{Lem}{Lemma}
\newtheorem{Rem}{Remark}
\newtheorem{Sta}{Proposition}
\newtheorem{Exa}{Example}
\renewcommand{\proofname}{\bf Proof}
\renewcommand{\thetrma}{\Alph{trma}}

\begin{abstract}
In this paper, order estimates for the Kolmogorov $n$-widths of an intersection of an arbitrary family of balls $\nu_\alpha B^{\overline{k}}_{\overline{p}_\alpha}$ in $l_q^k$ are obtained for $1\le q\le 2$, $n\le \frac k2$. Here $\overline{p}_\alpha = (p_{\alpha,1}, \, \dots, \, p_{\alpha,d})$, $\overline{k}=(k_1, \, \dots, \, k_d)$, $k=k_1\dots k_d$, $B^{\overline{k}}_{\overline{p}_\alpha}$ is the unit ball with respect to the anisotropic norm given by the vector $\overline{p}_\alpha$.
\end{abstract}

\section{Introduction}

In this paper, we study the problem of order estimates for the Kolmogorov widths of an intersection of a family of balls of different radii with respect to anisotropic norms in the space $l_q^k$ with $1\le q\le 2$. This question appears naturally in the context of the studies begun in \cite{galeev1, mal_rjut, vas_ball_inters, vas_mix2,  vas_mix_sev, vas_anisotr, mal_rjut1}. The result can be applied, for example, in estimating the widths of an intersection of anisotropic periodic Sobolev classes.

Let us give necessary notation.

Let $N\in \N$, $1\le s\le \infty$, $(x_i)_{i=1}^N\in \mathbb{R}^N$. We set $\|(x_i)_{i=1}^N\|_{l_s^N} = \left(\sum \limits _{i=1}^N |x_i|^s\right)^{1/s}$ for $s<\infty$, $\|(x_i)_{i=1}^N\|_{l_s^N} = \max _{1\le i\le N}|x_i|$ for $s=\infty$. The space $\mathbb{R}^N$ with this norm is denoted by $l_s^N$. By $B_s^N$ we denote the unit ball in $l_s^N$.

Let now $k_1, \, \dots, \, k_d\in \N$, $1\le p_1, \, \dots, \, p_d\le \infty$, $\overline{k}=(k_1, \, \dots, \, k_d)$, $\overline{p}=(p_1, \, \dots, \, p_d)$. By $l_{\overline{p}}^{\overline{k}}$ we denote the space $\mathbb{R}^{k_1\dots k_d}= \{(x_{j_1,\dots,j_d})_{1\le j_s\le k_s, \, 1\le s\le d}:\; x_{j_1,\dots,j_d}\in \mathbb{R}\}$ with norm defined by induction on $d$: for $d=1$ it is $\|\cdot\|_{l_{p_1}^{k_1}}$, and for $d\ge 2$,
$$
\|(x_{j_1,\dots,j_d})_{1\le j_s\le k_s, \, 1\le s\le d}\|_{l_{\overline{p}}^{\overline{k}}} = \left\|\bigl(\|(x_{j_1,\dots, \, j_{d-1}, \, j_d})_{1\le j_s\le k_s, \, 1\le s\le d-1}\|_{l_{(p_1,\dots, \, p_{d-1})}^{(k_1,\dots, \, k_{d-1})}}\bigr)_{j_d=1}^{k_d}\right\|_{l_{p_d}^{k_d}}.
$$
Given $\overline{k}=(k_1, \, \dots, \, k_d)$, in what follows we write $k = k_1\dots k_d$.

\begin{Def}
Let $X$ be a normed space, and let $M\subset X$, $n\in
\Z_+$. The Kolmogorov $n$-width of the set $M$ in $X$ is defined by the formula
$$
d_n(M, \, X) = \inf _{L\in {\cal L}_n(X)} \sup _{x\in M} \inf
_{y\in L} \|x-y\|;
$$
here ${\cal L}_n(X)$ is the family of all subsets in $X$
of dimension at most $n$.
\end{Def}

Let $A$ be a nonempty set. Suppose that, for each $\alpha\in A$, a number $\nu_\alpha>0$ and a vector $\overline{p}_\alpha=(p_{\alpha,1}, \, \dots, \, p_{\alpha,d})$ are given; here $1\le p_{\alpha,j}\le \infty$, $1\le j\le d$. Let $\overline{k}\in \N^d$. We set
\begin{align}
\label{m_def} M = \cap _{\alpha\in A} \nu_\alpha B^{\overline{k}}_{\overline{p}_\alpha}.
\end{align}
In this paper we obtain order estimates for the Kolmogorov $n$-widths of the set
$M$ in the space $l_q^k$ for $1\le q\le 2$, $n\le \frac k2$.

The estimates for the Kolmogorov widths of the ball $B_p^N$ in $l_q^N$ were obtained by Pietsch, Stesin, Ismagilov, Kashin, Gluskin, Garnaev \cite{pietsch1, stesin, kashin_oct, bib_kashin, gluskin1, bib_gluskin, garn_glus, bib_ismag}. For details, see \cite{itogi_nt, kniga_pinkusa, teml_book, alimov_tsarkov}. Estimates for the widths of an intersection of balls $\cap _{\alpha \in A} \nu_\alpha B^N_{p_\alpha}$ were obtained \cite{galeev1, vas_ball_inters}.

In \cite{galeev2, galeev5, izaak1, izaak2, mal_rjut, vas_besov, dir_ull} the problem of estimates for the widths of the ball $B^{k_1,k_2}_{p_1,p_2}$ in $l^{k_1,k_2}_{q_1,q_2}$ was studied (i.e., the case of anisotropic norms with $d=2$ was considered); for details, see \cite{vas_mix2}. In addition, recently Malykhin and Rjutin \cite{mal_rjut1} obtained estimates for the widths of the ball $B^{k_1,k_2}_{p_1,p_2}$ in $l^{k_1,k_2}_{q,q}$ for $1\le q\le 2$ and arbitrary $p_1$, $p_2$. In \cite{vas_mix_sev} order estimates for the widths of the set $\cap _{\alpha\in A}\nu_\alpha B^{k_1,k_2}_{p_{\alpha,1},p_{\alpha,2}}$ in $l^{k_1,k_2}_{q_1,q_2}$ for $2\le q_1, \, q_2<\infty$ and arbitrary $p_{\alpha,1}$, $p_{\alpha,2}$ are obtained. Estimates of widths of these sets have applications in problems of approximation of Besov classes and their intersections.

In \cite{vas_anisotr} estimates for the widths of $B^{\overline{k}}_{\overline{p}}$ in $l^{\overline{k}}_{\overline{q}}$ for $2\le q_j<\infty$, $j=1, \, \dots, \, d$, and arbitrary $\overline{p}$ are obtained; the result was applied in solving the problem of widths of anisotropic periodic Sobolev classes in an anisotropic Lebesgue space.

Now we formulate the main result of the present paper. First we need some notation.

Let $X$, $Y$ be sets, $f_1$, $f_2:\ X\times Y\rightarrow \mathbb{R}_+$.
We write $f_1(x, \, y)\underset{y}{\lesssim} f_2(x, \, y)$ (or
$f_2(x, \, y)\underset{y}{\gtrsim} f_1(x, \, y)$) if for each
$y\in Y$ there is $c(y)>0$ such that $f_1(x, \, y)\le
c(y)f_2(x, \, y)$ for all $x\in X$; $f_1(x, \,
y)\underset{y}{\asymp} f_2(x, \, y)$ if $f_1(x, \, y)
\underset{y}{\lesssim} f_2(x, \, y)$ and $f_2(x, \,
y)\underset{y}{\lesssim} f_1(x, \, y)$.

Given $a\in \R$, we write $a_+ = \max\{a, \, 0\}$. For $\overline{p}=(p_1, \, \dots, \, p_d)$, $1\le q\le 2$, $\overline{k}=(k_1, \, \dots, \, k_d)$ we set
\begin{align}
\label{phi_def} \Phi(\overline{p}, \, \overline{k}, \, q) = \prod _{j=1}^d k_j^{(1/q-1/p_j)_+}.
\end{align}
From the results of \cite{mal_rjut1} it follows that
$$
d_n(B^{\overline{k}}_{\overline{p}}, \, l^k_q) \underset{q}{\asymp} \Phi(\overline{p}, \, \overline{k}, \, q), \quad 1\le q\le 2, \, n\le k/2;
$$
for details, see \S 2.

For $\overline{p} = (p_1, \, \dots, \, p_d)$, $\lambda\in \R$, we denote by $\frac{\lambda}{\overline{p}}$ the vector with coordinates $\left(\frac{\lambda}{p_1}, \, \dots,\, \frac{\lambda}{p_d}\right)$.

Let $I=\{i_1, \, \dots, \, i_l\}\subset \{1, \, \dots, \, d\}$ be a nonempty set, $i_1<\dots<i_l$. Given $x=(x_1, \, \dots, \, x_d)\in \R^d$, we set $x_I=(x_{i_1}, \, \dots, \, x_{i_l})\in \R^l$. 

\begin{Def}
\label{nm_def}
Let $1\le m\le d+1$, $\overline{\alpha} = (\alpha_1, \, \dots, \, \alpha_m) \in A^m$. We say that $\overline{\alpha}\in {\cal N}_m$ if there are a set $I\subset \{1, \, \dots, \, d\}$ and numbers $\lambda_j=\lambda_j(\overline{\alpha}, \, I)>0$, $j=1, \, \dots, \, m$, such that $\sum \limits _{j=1}^m \lambda_j=1$, $\# I=m-1$, 
\begin{align}
\label{sum_paj_i_q}
\sum \limits _{j=1}^m \frac{\lambda_j}{p_{\alpha_j,i}} = \frac 1q, \quad i\in I,
\end{align}
and the points $(1/\overline{p}_{\alpha_j})_{I}$, $j=1, \, \dots, \, m$, are affinely independent. In this case we define the vector $\overline{\theta}(\overline{\alpha}, \, I) = (\theta_1(\overline{\alpha}, \, I), \, \dots, \, \theta_d(\overline{\alpha}, \, I))$ by the equation
\begin{align}
\label{theta_a_def}
\frac{1}{\overline{\theta}(\overline{\alpha}, \, I)} = \sum \limits _{j=1}^m \frac{\lambda_j}{\overline{p}_{\alpha_j}}.
\end{align}
\end{Def}

\begin{Rem}
\label{rem1}
By \eqref{sum_paj_i_q}, for $i\in I$ we have $\theta_i(\overline{\alpha}, \, I) = q$.
\end{Rem}

\begin{Trm}
\label{main}
Let $d\in \N$, $\overline{k}=(k_1, \, \dots, \, k_d)\in \N^d$, $k=k_1\dots k_d$, $1\le q\le 2$, $n\le \frac k2$. Let $A$ be a nonempty set, $\nu_\alpha>0$, $\overline{p}_\alpha = (p_{\alpha,1}, \, \dots, \, p_{\alpha,d}) \in [1, \, \infty]^d$, $\alpha\in A$. The set $M$ is defined by formula \eqref{m_def}, and the function $\Phi$, by formula \eqref{phi_def}; the sets ${\cal N}_m$ $(1\le m \le d+1)$, the subsets $I$, the numbers $\lambda_j(\overline{\alpha}, \, I)$ and the vector $\overline{\theta}(\overline{\alpha}, \, I)$ are such as in Definition {\rm \ref{nm_def}}. Then
$$
d_n(M, \, l_q^k) \underset{q,d}{\asymp} \min _{1\le m\le d+1} \inf _{\overline{\alpha} \in {\cal N}_m, \, I} \nu_{\alpha_1}^{\lambda_1(\overline{\alpha}, \, I)} \dots \nu_{\alpha_m}^{\lambda_m(\overline{\alpha}, \, I)} \Phi(\overline{\theta}(\overline{\alpha}, \, I), \, \overline{k}, \, q).
$$
\end{Trm}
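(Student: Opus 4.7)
The plan is to prove matching upper and lower bounds for $d_n(M,l_q^k)$, with constants depending only on $q$ and $d$.

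For the upper bound, fix any $m\in\{1,\dots,d+1\}$, $\overline{\alpha}=(\alpha_1,\dots,\alpha_m)\in\mathcal{N}_m$, and admissible $I\subset\{1,\dots,d\}$, with associated weights $\lambda_j=\lambda_j(\overline{\alpha},I)$ and vector $\overline{\theta}=\overline{\theta}(\overline{\alpha},I)$. The key tool is an iterated H\"older inequality on anisotropic norms: since $\sum_j\lambda_j=1$ and $\tfrac{1}{\overline{\theta}}=\sum_j\lambda_j/\overline{p}_{\alpha_j}$, by applying the generalized H\"older inequality in each of the $d$ coordinate directions and iterating through the tower of norms that defines $\|\cdot\|_{l^{\overline{k}}_{\overline{p}}}$, one obtains
$$
\|x\|_{l^{\overline{k}}_{\overline{\theta}}}\;\le\;\prod_{j=1}^{m}\|x\|_{l^{\overline{k}}_{\overline{p}_{\alpha_j}}}^{\lambda_j}.
$$
For $x\in M$ the right--hand side is bounded by $\prod_j\nu_{\alpha_j}^{\lambda_j}$, so $M\subset\bigl(\prod_j\nu_{\alpha_j}^{\lambda_j}\bigr)B^{\overline{k}}_{\overline{\theta}}$. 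Monotonicity of widths together with the Malykhin--Rjutin estimate $d_n(B^{\overline{k}}_{\overline{\theta}},l_q^k)\underset{q}{\asymp}\Phi(\overline{\theta},\overline{k},q)$ recalled in \S 2 gives $d_n(M,l_q^k)\underset{q}{\lesssim}\bigl(\prod_j\nu_{\alpha_j}^{\lambda_j}\bigr)\Phi(\overline{\theta},\overline{k},q)$. Taking the infimum over $(\overline{\alpha},I)$ and the minimum over $m$ yields the upper bound.

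For the lower bound, write $D(m,\overline{\alpha},I)$ for the right--hand side in the theorem and let $(m^*,\overline{\alpha}^*,I^*)$ realize its minimum. I would proceed in two steps. First, via a Carath\'eodory/duality analysis of the min--inf problem, show that one may take $m^*\le d+1$ with the vectors $(1/\overline{p}_{\alpha_j^*})_{I^*}$ affinely independent and the weights $\lambda_j^*$ strictly positive; these are exactly the conditions of Definition \ref{nm_def}. Second, construct a concrete test set $M_0\subset M$ whose $l_q^k$-width realizes $D(m^*,\overline{\alpha}^*,I^*)$ up to $(q,d)$-constants: choose coordinate blocks of full size $k_i$ for $i\in I^*$ (there $\theta_i^*=q$, so no renormalization is needed) and of appropriate sub-sizes for $i\notin I^*$, and rescale so that all $m^*$ active constraints saturate simultaneously. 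On this support, $M$ dominates a scaled copy of $\bigl(\prod_j\nu_{\alpha_j^*}^{\lambda_j^*}\bigr)B^{\overline{k}}_{\overline{\theta}^*}$, and invoking the Malykhin--Rjutin lower bound for a single anisotropic ball recovers the factor $\Phi(\overline{\theta}^*,\overline{k},q)$.

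The principal obstacles are twofold. First, one must verify that $M_0$ sits inside $M$, i.e., that the inactive constraints $\|x\|_{l^{\overline{k}}_{\overline{p}_\alpha}}\le\nu_\alpha$ for $\alpha\notin\{\alpha_1^*,\dots,\alpha_{m^*}^*\}$ do not shrink $M_0$; this is the geometric content of optimality of $\overline{\alpha}^*$ and should be handled by a separating--hyperplane argument paralleling those in \cite{vas_ball_inters,vas_mix_sev,vas_anisotr}. Second, the combinatorial bookkeeping over which coordinates belong to $I^*$ versus its complement must be done carefully, since the ``off--$I^*$'' factors $k_i^{(1/q-1/\theta_i^*)_+}$ in $\Phi(\overline{\theta}^*,\overline{k},q)$ have to come out of the construction correctly; I expect an induction on $d$, peeling off one coordinate direction at a time as in \cite{vas_anisotr}, will give the cleanest route.
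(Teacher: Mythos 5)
Your upper bound is essentially the paper's (the H\"older-type inclusion $M\subset\nu_{\alpha_1}^{\lambda_1}\cdots\nu_{\alpha_m}^{\lambda_m}B^{\overline{k}}_{\overline{\theta}}$ plus the elementary estimate for one ball), but the lower bound as you describe it has a genuine gap. You propose to place inside $M$ a scaled copy $M_0$ of the ball $B^{\overline{k}}_{\overline{\theta}^*}$ supported on a coordinate block with sides $s_i$ and then invoke the Malykhin--Rjutin estimate for that single ball. That estimate only applies when $n$ is at most a fixed fraction of the dimension of the space the set lives in, and your block has dimension $s_1\cdots s_d$, which is in general far smaller than $n$: for instance every coordinate with $\theta_i^*<q$ forces $s_i=1$, and in the extreme case $\Phi(\overline{\theta}^*,\overline{k},q)=1$ the support is one-dimensional, so $d_n(M_0,l_q^k)=0$ for every $n\ge 1$ while the theorem allows $n$ up to $k/2$. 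No lower bound for $d_n(M,l_q^k)$ can be extracted from such an $M_0$ alone. The paper's mechanism is different and is the crux: Theorem \ref{mal_rjut_trm} (rigidity) is applied to the \emph{whole} set $M$, which is unconditional and invariant under the transitive product-permutation group, giving $d_n(M,l_q^k)\gtrsim\sup_{x\in M}\|x\|_{l_q^k}$ for $n\le k/2$ (Corollary \ref{1_ball}); after that one only has to exhibit a single vector of large $l_q$-norm in $M$, not a whole ball, and the ambient dimension never shrinks. Your plan never uses this rigidity for $M$, and without it the route through a low-dimensional test set fails.

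The block construction itself is also set up with the roles of $I^*$ and its complement interchanged. In the paper the coordinates outside $I$ are pinned at the extremes ($s_i=k_i$ where $\theta_i(\overline{\alpha})>q$, $s_i=1$ where $\theta_i(\overline{\alpha})<q$, see \eqref{s_inii}), because these are precisely the coordinates that must produce the factor $\Phi(\overline{\theta},\overline{k},q)$, while the coordinates in $I$ (where $\theta_i=q$, hence invisible in the $l_q$-norm of the test vector) are the free parameters fixed by the saturation system \eqref{s_i_syst}; showing that this solution satisfies $1\le s_i\le k_i$ is a nontrivial step that uses the optimality of $(\overline{\alpha},I)$. With your assignment ($s_i=k_i$ on $I^*$, tunable sizes off $I^*$) you either cannot saturate all $m^*$ active constraints (only the overall scaling remains free once the off-$I^*$ sizes are put where $\Phi$ needs them) or you lose the factor $\Phi$ by moving those sizes away from the extremes. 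Finally, the hardest part of the argument --- verifying the inactive constraints \eqref{incl_mg1_ineq} for every $\beta\in A$ via Lemma \ref{conv_hull} and comparisons with competitors in ${\cal N}_2$ and ${\cal N}_{m+1}$, and then removing the assumptions of general position and finiteness of $A$ by perturbation and passage to the limit --- is only named as an ``obstacle'' in your proposal, not carried out, and the attainment of the min--inf that you take for granted is exactly what that reduction is needed for.
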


\section{Auxilliary assertions and the upper estimate}

The following result is obtained in \cite{mal_rjut1}.

\begin{trma}
\label{mal_rjut_trm} {\rm (see \cite{mal_rjut1}, Theorem 1).} Let $S_k$ be the group of permutations of $k$ elements, and let $H$ be a subgroup of $S_k$ that transitively acts on $\{1, \, \dots, \, k\}$ $($i.e., for all $i$, $j\in \{1, \, \dots, \, k\}$ there is a permutation $h\in H$ such that $h(i)=j)$. Let $K\subset \R^k$ be an unconditional set invariant under the action of $H$ on the set of coordinates; i.e., $(\varepsilon_1x_{h(1)}, \, \dots, \, \varepsilon_k x_{h(k)})\in K$ for each $(x_1, \, \dots, \, x_k)\in K$, $(\varepsilon_1, \, \dots, \, \varepsilon_k)\in \{\pm 1\}^k$, $h\in H$. Then, for $1\le q\le 2$, $\varepsilon\in (0, \, 1)$, $n\le (1-\varepsilon)k$, the following estimate holds:
$$
d_n(K, \, l_q^k) \underset{q, \, \varepsilon}{\gtrsim} \sup _{x\in K} \|x\|_{l_q^k}.
$$
\end{trma}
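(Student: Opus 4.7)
My plan is to reformulate the width by duality. Set $R:=\sup_{x\in K}\|x\|_{l_q^k}$ and pick $y^*\in K$, $y^*\ge 0$ (using unconditionality), with $\|y^*\|_{l_q^k}=R$. By Hahn--Banach,
\begin{equation*}
\dist_{l_q^k}(x,L)=\sup\bigl\{\phi(x):\phi\in L^{\perp},\ \|\phi\|_{l_{q'}^k}\le 1\bigr\},\qquad q'=q/(q-1),
\end{equation*}
so
\begin{equation*}
d_n(K,l_q^k)=\inf_L\sup_{\phi\in L^{\perp},\,\|\phi\|_{l_{q'}^k}\le 1}\sup_{x\in K}\phi(x).
\end{equation*}
It thus suffices to construct, for every $L$ with $\dim L=n\le(1-\varepsilon)k$, a functional $\phi\in L^{\perp}$ with $\|\phi\|_{l_{q'}^k}\lesssim 1$ and $\sup_{x\in K}\phi(x)\gtrsim R$ (constants depending only on $q$ and $\varepsilon$). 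My natural full-space candidate is the H\"{o}lder dual of $y^*$, $\phi^*_i:=(y^*_i)^{q-1}/R^{q-1}$, for which $\|\phi^*\|_{l_{q'}^k}=1$ and $\phi^*(y^*)=R$.

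\textbf{Orbit and first-moment calculation.} I would then exploit the symmetry of $K$. For $(\sigma,h)\in\{\pm 1\}^k\times H$ put $y^{(\sigma,h)}_i:=\sigma_i y^*_{h(i)}$ and $\phi^{(\sigma,h)}_i:=\sigma_i\phi^*_{h(i)}$; the hypotheses give $y^{(\sigma,h)}\in K$, $\|\phi^{(\sigma,h)}\|_{l_{q'}^k}=1$, and $\phi^{(\sigma,h)}(y^{(\sigma,h)})=R$ identically. Letting $P_{L^{\perp}}$ be the $l_2$-orthogonal projection onto $L^{\perp}$, set $\psi^{(\sigma,h)}:=P_{L^{\perp}}\phi^{(\sigma,h)}$. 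From $\psi(y)=\phi(y)-\phi(P_L y)$, $\mathbb{E}_\sigma[\sigma_i\sigma_j]=\delta_{ij}$ and the $H$-transitivity identity $\mathbb{E}_h(y^*_{h(j)})^q=R^q/k$, a short calculation gives
\begin{equation*}
\mathbb{E}_{(\sigma,h)}\,\psi^{(\sigma,h)}(y^{(\sigma,h)})=R-\frac{\mathrm{tr}(P_L)}{k}R=\frac{k-n}{k}R\ge\varepsilon R.
\end{equation*}
When $q=q'=2$ the proof would already be essentially complete: $P_{L^{\perp}}$ is a contraction in $l_2^k$, so $\|\psi^{(\sigma,h)}\|_{l_2^k}\le 1$ deterministically, and a Markov-type extraction provides $(\sigma,h)$ with $\dist_{l_2^k}(y^{(\sigma,h)},L)\ge\psi(y)/\|\psi\|_{l_2^k}\gtrsim\varepsilon R$.

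\textbf{Controlling $\|\psi\|_{l_{q'}^k}$ for $q<2$.} This is the step I expect to be the main obstacle: for $q<2$ one has $q'>2$, and $P_{L^{\perp}}$ is no longer a contraction in $l_{q'}^k$. The direct Khintchine--Kahane estimate
\begin{equation*}
\mathbb{E}_\sigma\|\psi^{(\sigma,h)}\|_{l_{q'}^k}^{q'}\lesssim_q\sum_i(P_{L^{\perp}})_{ii}^{q'/2}\|\phi^*\|_{l_\infty^k}^{q'}\le k-n
\end{equation*}
(using $\|\phi^*\|_{l_\infty^k}\le\|\phi^*\|_{l_{q'}^k}=1$ and $a^{q'/2}\le a$ for $a\in[0,1]$) loses a power of $k$. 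Closing this gap, which I expect to be the heart of the Malykhin--Rjutin argument, would combine the Rademacher sign-averaging with the $H$-orbit averaging and appeal to the cotype-$2$ (equivalently type-$q'$) structure of $l_q^k$ in the range $1\le q\le 2$, producing a dimension-free bound $\mathbb{E}_{(\sigma,h)}\|\psi^{(\sigma,h)}\|_{l_{q'}^k}\le C_q$. Once this is in hand, a truncated Markov / Paley--Zygmund extraction selects $(\sigma,h)$ realizing both $\psi^{(\sigma,h)}(y^{(\sigma,h)})\gtrsim_\varepsilon R$ and $\|\psi^{(\sigma,h)}\|_{l_{q'}^k}\lesssim_q 1$ simultaneously; by the duality of Step~1,
\begin{equation*}
\dist_{l_q^k}(y^{(\sigma,h)},L)\ge\frac{\psi^{(\sigma,h)}(y^{(\sigma,h)})}{\|\psi^{(\sigma,h)}\|_{l_{q'}^k}}\gtrsim_{q,\varepsilon}R,
\end{equation*}
yielding the claimed lower bound on $d_n(K,l_q^k)$. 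Alternatively, as a backup I would pass to the Bernstein-type lower bound via $\tilde K:=\mathrm{conv}\{y^{(\sigma,h)}\}\subset K$, a $\Gamma$-symmetric convex body whose widths can be estimated by reduction to known widths of $l_p$-balls in $l_q^k$; but the cleanest route is the cotype-based functional-theoretic argument above, and the precise cotype constants of $l_q^k$ for $1\le q\le 2$ are what must deliver the $k$-independence.
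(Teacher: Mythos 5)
First, note that this paper does not prove the statement at all: it is Theorem A, quoted from Malykhin--Ryutin \cite{mal_rjut1}, with the remark that for $1<q\le 2$ the proof uses ideas of Gluskin \cite{bib_glus_3} and that the case $q=1$ is Malykhin's rigidity theorem \cite{mal}. So your sketch can only be judged on its own merits. Your duality set-up, the choice $\phi^*_i=(y^*_i)^{q-1}/R^{q-1}$, and the first-moment identity $\mathbb{E}_{(\sigma,h)}\psi^{(\sigma,h)}(y^{(\sigma,h)})=(1-n/k)R\ge\varepsilon R$ are all correct (transitivity does give $\mathbb{E}_h (y^*_{h(i)})^q=R^q/k$), and the case $q=2$ is complete. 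But the step you defer --- a dimension-free bound $\mathbb{E}_{(\sigma,h)}\|P_{L^\perp}\phi^{(\sigma,h)}\|_{l_{q'}^k}\le C_q$ obtained ``from cotype'' --- is precisely the heart of the theorem, and as you state it (uniformly for $1\le q\le 2$) it is false. Take $q=1$, $K=B_\infty^k$, so $y^*=\phi^*=(1,\dots,1)$ and $\phi^{(\sigma,h)}=\sigma$, and let $L$ be a generic subspace of dimension $n=k/2$: then $\mathbb{E}_\sigma\|P_{L^\perp}\sigma\|_{l_\infty^k}\asymp\sqrt{\log k}$, and the same holds for most $(\sigma,h)$, so your ratio argument can only give $d_n(B_\infty^k,l_1^k)\gtrsim k/\sqrt{\log k}$ rather than $k$. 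An abstract type/cotype appeal cannot repair this, since the relevant type-$2$ constant of $l_\infty^k$ itself grows like $\sqrt{\log k}$; this logarithmic obstruction is exactly why the $q=1$ case (rigidity of the cube in $l_1$) was a long-standing problem, solved by Malykhin with a genuinely different construction of the test functionals, not by projecting the norming functional.

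For $1<q<2$ your scheme can in fact be closed, but by a concrete coordinatewise estimate you did not supply rather than by cotype: with $\psi_i=\sum_j(P_{L^\perp})_{ij}\sigma_j\phi^*_{h(j)}$, Khintchine gives $\mathbb{E}_\sigma|\psi_i|^{q'}\lesssim_{q'}\bigl(\sum_j(P_{L^\perp})_{ij}^2(\phi^*_{h(j)})^2\bigr)^{q'/2}$, and H\"older with exponents $\frac{q'}{q'-2},\frac{q'}{2}$ together with $(P_{L^\perp})_{ii}\le 1$ and $\sum_i(P_{L^\perp})_{ij}^2=(P_{L^\perp})_{jj}\le 1$ yield $\mathbb{E}_\sigma\|\psi\|_{l_{q'}^k}^{q'}\lesssim_{q'}\|\phi^*\|_{l_{q'}^k}^{q'}=1$ for every fixed $h$; your truncated-Markov extraction then finishes the proof, with constants blowing up as $q\to 1$, consistent with the failure at the endpoint. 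So as written the proposal has a genuine gap: the key estimate is only conjectured, the proposed mechanism for it (a dimension-free cotype bound) does not work, and the case $q=1$, which the theorem explicitly covers, is out of reach of this construction altogether.
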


For $1<q\le 2$, the proof of Theorem \ref{mal_rjut_trm}
exploits some ideas of \cite{bib_glus_3}. For $q=1$, the estimate was obtained in \cite{mal}.

The set $M$ from formula \eqref{m_def} satisfies the conditions of Theorem \ref{mal_rjut_trm}. As $H$, we take the subgroup of elements of form $\sigma = (\sigma_1, \, \dots, \, \sigma_d)\in S_{k_1} \times \dots \times S_{k_d}$; it acts according to the equation $\sigma(i_1, \, \dots, \, i_d) = (\sigma_1(i_1), \, \dots, \, \sigma_d(i_d))$, $i_j\in \{1, \, \dots, \, k_j\}$, $1\le j\le d$.

\begin{Cor}
\label{1_ball}
Let $1\le q\le 2$, $n\le \frac k2$, anf let the set $M$ be defined by formula \eqref{m_def}. Then 
\begin{align}
\label{m_rigid}
d_n(M, \, l_q^k) \underset{q}{\gtrsim} \sup _{x\in M} \|x\|_{l_q^k}.
\end{align}
In particular, if $M=B_{\overline{p}}^{\overline{k}}$, we get
\begin{align}
\label{fin_dim_ball_phi}
\Phi(\overline{p}, \, \overline{k}, \, q)\underset{q}{\lesssim}d_n(B_{\overline{p}}^{\overline{k}}, \, l_q^k) \le \Phi(\overline{p}, \, \overline{k}, \, q),
\end{align}
where the function $\Phi$ is defined by formula \eqref{phi_def}.
\end{Cor}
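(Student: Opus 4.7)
The plan is to derive both parts of Corollary \ref{1_ball} directly from Theorem \ref{mal_rjut_trm}. I would verify that $M$ satisfies the hypotheses of that theorem, with $H = S_{k_1}\times\dots\times S_{k_d}$ acting block-coordinate-wise as described in the paragraph preceding the corollary; this action is transitive on $\{1,\dots,k\}$. Unconditionality of $M$ is immediate, since each anisotropic norm $\|\cdot\|_{l^{\overline{k}}_{\overline{p}_\alpha}}$ depends only on $|x_{j_1,\dots,j_d}|$ (its inductive definition stacks $l_p$-norms, all of which involve absolute values), so every ball $\nu_\alpha B^{\overline{k}}_{\overline{p}_\alpha}$, and therefore $M$, is invariant under sign changes. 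Invariance under $H$ is equally straightforward: each single-axis $l_{p_j}^{k_j}$-norm is $S_{k_j}$-symmetric, and iterating the definition propagates this to full $H$-invariance of $\|\cdot\|_{l^{\overline{k}}_{\overline{p}_\alpha}}$, hence of each ball and of $M$. Applying Theorem \ref{mal_rjut_trm} with $\varepsilon = 1/2$ (permitted since $n \le k/2$) then yields \eqref{m_rigid}.

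For the specialisation to $M = B^{\overline{k}}_{\overline{p}}$, the upper half of \eqref{fin_dim_ball_phi} comes from the trivial bound $d_n(M, l_q^k) \le d_0(M, l_q^k) = \sup_{x\in M}\|x\|_{l_q^k}$ combined with the identity $\sup_{x\in B^{\overline{k}}_{\overline{p}}}\|x\|_{l_q^k} = \Phi(\overline{p}, \overline{k}, q)$. I would establish this identity by induction on $d$ using the one-dimensional fact $\sup_{x\in B_p^N}\|x\|_{l_q^N} = N^{(1/q-1/p)_+}$ (trivial for $p\le q$; a H\"older computation saturated by the constant vector for $p>q$), applied successively to each coordinate block in the inductive definition of the anisotropic norm. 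The lower half of \eqref{fin_dim_ball_phi} is then \eqref{m_rigid} specialised to $M = B^{\overline{k}}_{\overline{p}}$, together with the same identification of the supremum.

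No real obstacle arises: once Theorem \ref{mal_rjut_trm} is available, the corollary amounts to checking the symmetry hypotheses for an intersection of anisotropic balls and performing a routine iterated H\"older estimate for the operator norm of the embedding $l^{\overline{k}}_{\overline{p}} \hookrightarrow l_q^k$. The only minor point worth stating carefully is that the successive one-dimensional bounds are simultaneously saturated by the constant vector, so that the iterated inequality becomes the asserted equality $\Phi(\overline{p}, \overline{k}, q)$ rather than a strict upper bound.
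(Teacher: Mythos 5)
Your proposal is correct and follows essentially the same route as the paper: the rigidity bound \eqref{m_rigid} is obtained by applying Theorem \ref{mal_rjut_trm} to $M$ with the block-permutation group $H=S_{k_1}\times\dots\times S_{k_d}$ (exactly the group the paper designates), and \eqref{fin_dim_ball_phi} then follows from the trivial upper bound $d_n\le d_0=\sup_{x\in B^{\overline{k}}_{\overline{p}}}\|x\|_{l_q^k}$ together with the identification of this supremum with $\Phi(\overline{p},\overline{k},q)$, which the paper treats as immediate. One tiny imprecision: for coordinates with $p_j\le q$ the one-dimensional bound is saturated by a single coordinate spike rather than by the constant vector, so the extremizer is a tensor product of constant blocks (where $p_j>q$) and spikes (where $p_j\le q$); this does not affect the argument.
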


The upper estimate in \eqref{fin_dim_ball_phi} is trivial; the lower estimate follows from \eqref{m_rigid}. For $d=2$, formula \eqref{fin_dim_ball_phi} was written in \cite{mal_rjut1}. 

\begin{Lem}
\label{inter_incl} Let $\overline{k}=(k_1, \, \dots, \, k_d)\in \N^d$, $m\in \N$, $\nu_j>0$, $\overline{p}^j\in [1, \, \infty]^d$, $\lambda_j\ge 0$, $1\le j\le m$, $\sum \limits _{j=1}^m \lambda_j=1$, and let the vector $\overline{p}$ be given by the equation
$\frac{1}{\overline{p}} = \sum \limits _{j=1}^m \frac{\lambda_j}{\overline{p}^j}$.
Then
$$
\cap _{j=1}^m \nu_j B^{\overline{k}}_{\overline{p}^j} \subset \nu_1^{\lambda_1} \dots \nu_m^{\lambda_m}B^{\overline{k}}_{\overline{p}}.
$$
\end{Lem}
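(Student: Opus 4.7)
The plan is to prove the inclusion by showing the pointwise norm inequality $\|x\|_{l_{\overline{p}}^{\overline{k}}}\le \prod_{j=1}^m \|x\|_{l_{\overline{p}^j}^{\overline{k}}}^{\lambda_j}$ for every $x\in \R^k$; then, for $x$ in the intersection, each factor is at most $\nu_j$, and the conclusion follows immediately.

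The main tool is the generalized H\"older inequality: if nonnegative weights $\lambda_1,\dots,\lambda_m$ with $\sum_j \lambda_j=1$ satisfy $\sum_j \lambda_j/r^j=1/r$, then for any nonnegative sequences $a_j=(a_{j,i})_{i=1}^N$,
$$
\Bigl\|\prod_{j=1}^m a_j^{\lambda_j}\Bigr\|_{l_r^N} \le \prod_{j=1}^m \|a_j\|_{l_{r^j}^N}^{\lambda_j},
$$
which, after the substitution $r\mapsto\lambda_j p_d/r^j$ as H\"older exponents, reduces to the classical multilinear H\"older inequality (the endpoint cases $r^j=\infty$ are handled directly, pulling out an $\|a_j\|_\infty^{\lambda_j}$ factor).

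I would then argue by induction on $d$. For $d=1$ the claim $\|x\|_{l_p^{k_1}}\le \prod_j\|x\|_{l_{p^j}^{k_1}}^{\lambda_j}$ is exactly the generalized H\"older inequality applied with $a_j=(|x_i|)_i$ (constant in $j$). For the inductive step $d\ge 2$, write
$$
\|x\|_{l_{\overline{p}}^{\overline{k}}} = \Bigl\|\bigl(b_{j_d}\bigr)_{j_d=1}^{k_d}\Bigr\|_{l_{p_d}^{k_d}}, \qquad b_{j_d} = \|x_{\cdot,j_d}\|_{l_{(p_1,\dots,p_{d-1})}^{(k_1,\dots,k_{d-1})}},
$$
and for each fixed slice apply the induction hypothesis in dimension $d-1$: setting $a_{j,j_d}=\|x_{\cdot,j_d}\|_{l_{(p^j_1,\dots,p^j_{d-1})}^{(k_1,\dots,k_{d-1})}}$, one has $b_{j_d}\le \prod_j a_{j,j_d}^{\lambda_j}$. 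Monotonicity of $l_{p_d}^{k_d}$ on nonnegative sequences, together with the generalized H\"older inequality above applied in the last coordinate (using $\sum_j\lambda_j/p^j_d=1/p_d$), yields
$$
\|x\|_{l_{\overline{p}}^{\overline{k}}}\le \Bigl\|\prod_j a_{j,\cdot}^{\lambda_j}\Bigr\|_{l_{p_d}^{k_d}} \le \prod_j\|a_{j,\cdot}\|_{l_{p^j_d}^{k_d}}^{\lambda_j} = \prod_{j=1}^m \|x\|_{l_{\overline{p}^j}^{\overline{k}}}^{\lambda_j},
$$
closing the induction.

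The step requiring the most care is the coordinated bookkeeping of exponents: one needs the hypothesis $\frac{1}{\overline{p}}=\sum_j \frac{\lambda_j}{\overline{p}^j}$ to hold simultaneously in each coordinate, so that both the inductive application in dimensions $1,\dots,d-1$ and the final one-dimensional H\"older step in coordinate $d$ are legitimate. Boundary cases ($p^j_i=\infty$ or $\lambda_j=0$) are innocuous but should be mentioned, since they correspond to trivially dropping a factor from the product.
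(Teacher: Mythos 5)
Your proof is correct, and it is worth noting how it differs in organization from the paper's. The paper disposes of the lemma in two lines: the case $m=2$ is reduced to H\"older's inequality (with the anisotropic, $d$-dimensional bookkeeping delegated to Lemma 1 of the cited paper on mixed norms, ``the proof is similar'' for general $d$), and general $m$ is then obtained by induction on $m$, i.e.\ by repeatedly intersecting with one more ball. You instead treat all $m$ balls simultaneously via the multilinear (generalized) H\"older inequality $\bigl\|\prod_j a_j^{\lambda_j}\bigr\|_{l_r^N}\le \prod_j\|a_j\|_{l_{r^j}^N}^{\lambda_j}$ and handle the anisotropy by an explicit induction on $d$, one coordinate level at a time, using monotonicity of the outer $l_{p_d}^{k_d}$ norm on nonnegative sequences. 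Both arguments rest on the same underlying fact (H\"older interpolation of $l_p$ norms applied coordinatewise in the mixed norm), but yours is self-contained and proves the stronger pointwise statement $\|x\|_{l_{\overline{p}}^{\overline{k}}}\le\prod_j\|x\|_{l_{\overline{p}^j}^{\overline{k}}}^{\lambda_j}$ directly, whereas the paper's route trades the $d$-induction for a citation and the multilinear step for an $m$-induction. One small slip: your parenthetical description of the H\"older exponents (``substitution $r\mapsto\lambda_j p_d/r^j$'') is garbled --- the correct conjugate exponents are $q_j=r^j/(\lambda_j r)$, which satisfy $\sum_j 1/q_j=1$ precisely because $\sum_j\lambda_j/r^j=1/r$ --- but the inequality you invoke is standard and true, and the endpoint cases are handled as you say, so this does not affect the argument.
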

\begin{proof}
The case $m=1$ is trivial. The case $m=2$ follows from H\"{o}lder's inequality; for $d=2$ it is proved in \cite[Lemma 1]{vas_mix2}, for arbitrary $d\in \N$ the proof is similar. For arbitrary $m\ge 3$ the inclusion can be obtained by induction in $m$.
\end{proof}

From the upper estimate in \eqref{fin_dim_ball_phi} and Lemma \ref{inter_incl} we get
\begin{Cor}
Let the conditions of Theorem {\rm \ref{main}} hold. Then $$d_n(M, \, l_q^k)\le \min _{1\le m\le d+1} \inf _{\overline{\alpha} \in {\cal N}_m, \, I} \nu_{\alpha_1}^{\lambda_1(\overline{\alpha}, \, I)} \dots \nu_{\alpha_m}^{\lambda_m(\overline{\alpha}, \, I)} \Phi(\overline{\theta}(\overline{\alpha}, \, I), \, \overline{k}, \, q).$$
\end{Cor}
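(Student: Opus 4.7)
The plan is to combine the two ingredients already in hand: the inclusion from Lemma~\ref{inter_incl}, which lets us absorb any finite subfamily of the constraints defining $M$ into a single anisotropic ball, and the upper estimate in~\eqref{fin_dim_ball_phi}, which bounds the width of such a ball by $\Phi$. After chaining these, the desired bound will follow by taking the infimum over all admissible choices of $m$, $\overline{\alpha}$, and $I$.

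First I would fix $m \in \{1,\dots,d+1\}$, a tuple $\overline{\alpha} = (\alpha_1,\dots,\alpha_m) \in {\cal N}_m$, and a set $I \subset \{1,\dots,d\}$ with $\#I = m-1$ as in Definition~\ref{nm_def}. Since $M = \cap_{\alpha\in A} \nu_\alpha B^{\overline{k}}_{\overline{p}_\alpha}$, we trivially have $M \subset \cap_{j=1}^m \nu_{\alpha_j} B^{\overline{k}}_{\overline{p}_{\alpha_j}}$. The numbers $\lambda_j = \lambda_j(\overline{\alpha},I) > 0$ are nonnegative and sum to $1$, and by~\eqref{theta_a_def} the vector $\overline{\theta} = \overline{\theta}(\overline{\alpha},I)$ satisfies $\frac{1}{\overline{\theta}} = \sum_{j=1}^m \frac{\lambda_j}{\overline{p}_{\alpha_j}}$. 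Hence Lemma~\ref{inter_incl} applies and gives
$$
M \subset \cap_{j=1}^m \nu_{\alpha_j} B^{\overline{k}}_{\overline{p}_{\alpha_j}} \subset \nu_{\alpha_1}^{\lambda_1}\dots \nu_{\alpha_m}^{\lambda_m} B^{\overline{k}}_{\overline{\theta}}.
$$

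Next I would invoke monotonicity and positive homogeneity of the Kolmogorov width (both immediate from its definition): if $M \subset cK$ for $c>0$, then $d_n(M,l_q^k) \le c\, d_n(K, l_q^k)$. Applying this with $K = B^{\overline{k}}_{\overline{\theta}}$ and $c = \nu_{\alpha_1}^{\lambda_1}\dots \nu_{\alpha_m}^{\lambda_m}$, together with the upper estimate in~\eqref{fin_dim_ball_phi}, yields
$$
d_n(M,l_q^k) \le \nu_{\alpha_1}^{\lambda_1}\dots \nu_{\alpha_m}^{\lambda_m}\, d_n(B^{\overline{k}}_{\overline{\theta}}, l_q^k) \le \nu_{\alpha_1}^{\lambda_1}\dots \nu_{\alpha_m}^{\lambda_m}\, \Phi(\overline{\theta}(\overline{\alpha},I), \overline{k}, q).
$$
Taking the infimum over all $\overline{\alpha} \in {\cal N}_m$ and all admissible $I$, and then the minimum over $1 \le m \le d+1$, gives the claimed inequality.

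There is no genuine obstacle: the only things to verify are that the coordinates of $\overline{\theta}$ lie in $[1,\infty]$ (which holds because $\frac{1}{\overline{\theta}}$ is a convex combination of vectors in $[0,1]^d$, so each component of $\overline{\theta}$ is in $[1,\infty]$, making $B^{\overline{k}}_{\overline{\theta}}$ a legitimate anisotropic ball) and that the $\lambda_j$ supplied by Definition~\ref{nm_def} are indeed nonnegative with sum $1$, both of which are built into the definition of ${\cal N}_m$. Everything else is a direct application of the stated lemma and corollary.
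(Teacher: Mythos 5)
Your proposal is correct and follows exactly the paper's (very brief) argument: the paper deduces the corollary from the inclusion $M\subset \cap_{j=1}^m \nu_{\alpha_j} B^{\overline{k}}_{\overline{p}_{\alpha_j}} \subset \nu_{\alpha_1}^{\lambda_1}\dots\nu_{\alpha_m}^{\lambda_m} B^{\overline{k}}_{\overline{\theta}}$ of Lemma \ref{inter_incl} combined with the upper estimate in \eqref{fin_dim_ball_phi}, precisely as you do. Your additional checks (that $\overline{\theta}\in[1,\infty]^d$ and the $\lambda_j$ are admissible) are fine and simply make explicit what the paper leaves implicit.
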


\section{Proof of the lower estimate: the case of finite $A$ and general position}

In this section, we consider the case $\# A<\infty$; in addition, we suppose that the points $(\overline{p}_{\alpha})_{\alpha\in A}$ are in general position (see the definition below).

\begin{Def}
\label{gen_pos_def}
We say that the points $(\overline{p}_{\alpha})_{\alpha\in A}$ are in general position if
\begin{enumerate}
\item for each number $m\in \{2, \, \dots, \, d+1\}$, for each subset $I\subset \{1, \, \dots, \, d\}$ such that $\# I = m-1$, and for all different $\alpha_1, \, \dots, \, \alpha_m\in A$ the points $\{(1/\overline{p}_{\alpha_j})_I\}_{j=1}^m$ are affinely independent;

\item if $1\le m\le d+1$, $\overline{\alpha}=(\alpha_1, \, \dots, \, \alpha_m) \in {\cal N}_m$, the subset $I$ is such as in Definition {\rm \ref{nm_def}}, $i\notin I$, then $\theta_i(\overline{\alpha})\ne q$; in particular, for $m=1$ we get $p_{\alpha,i}\ne q$ for each $\alpha\in A$, $i=1, \, \dots, \, d$.
\end{enumerate}
\end{Def}

We set
\begin{align}
\label{psi_def}
\Psi(\{\overline{p}_{\alpha}\}_{\alpha\in A}, \, \overline{k}, \, q) = \min _{1\le m\le d+1} \inf _{\overline{\alpha} \in {\cal N}_m, \, I} \nu_{\alpha_1}^{\lambda_1(\overline{\alpha}, \, I)} \dots \nu_{\alpha_m}^{\lambda_m(\overline{\alpha}, \, I)} \Phi(\overline{\theta}(\overline{\alpha}, \, I), \, \overline{k}, \, q),
\end{align}
where $I$, $\overline{\theta}(\overline{\alpha}, \, I)$ are from Definition \ref{nm_def}. Since $A$ in finite, the minimum in \eqref{psi_def} is attained.

Let $\Psi(\{\overline{p}_{\alpha}\}_{\alpha\in A}, \, \overline{k}, \, q) = \nu_{\alpha_1}^{\lambda_1(\overline{\alpha}, \, I)} \dots \nu_{\alpha_m}^{\lambda_m(\overline{\alpha}, \, I)} \Phi(\overline{\theta}(\overline{\alpha}, \, I), \, \overline{k}, \, q)$. We show that $$d_n(M, \, l_q^k) \underset{q,d}{\gtrsim} \nu_{\alpha_1}^{\lambda_1(\overline{\alpha}, \, I)} \dots \nu_{\alpha_m}^{\lambda_m(\overline{\alpha}, \, I)} \Phi(\overline{\theta}(\overline{\alpha}, \, I), \, \overline{k}, \, q).$$ By Corollary \ref{1_ball}, it suffices to check that
\begin{align}
\label{sup_x_m}
\sup _{x\in M} \|x\|_{l_q^k} \underset{q}{\gtrsim} \nu_{\alpha_1}^{\lambda_1(\overline{\alpha}, \, I)} \dots \nu_{\alpha_m}^{\lambda_m(\overline{\alpha}, \, I)} \Phi(\overline{\theta}(\overline{\alpha}, \, I), \, \overline{k}, \, q).
\end{align}

Given $\overline{s}=(s_1, \, \dots, \, s_d)$ with $s_i\in \{1, \, \dots, \, k_i\}$, $1\le i\le d$, we set
\begin{align}
\label{xs_def}
\hat x(\overline{s}) =(\hat x_{i_1, \, \dots, \, i_d}(\overline{s}))_{1\le i_j\le k_j, \, 1\le j\le d}, \text{ where }\hat x_{i_1, \, \dots, \, i_d}(\overline{s})= \begin{cases} 1, & 1\le i_j \le s_j, \; j=1, \, \dots, \, d, \\ 0, & \text{otherwise}.\end{cases}
\end{align}

Let us prove \eqref{sup_x_m}.

{\bf The case $m=1$.} We have
\begin{align}
\label{m1min} \Psi(\{\overline{p}_{\beta}\}_{\beta\in A}, \, \overline{k}, \, q)=\nu_\alpha \Phi(\overline{p}_\alpha, \, \overline{k}, \, q) \stackrel{\eqref{phi_def}}{=} \nu_\alpha \prod_{i=1}^d k_i^{(1/q-1/p_{\alpha,i})_+}
\end{align}
for some $\alpha \in A$.

Let $J=\{i\in \overline{1, \, d}:\; p_{\alpha,i}>q\}$,
\begin{align}
\label{s_i} s_i = \begin{cases} k_i, & i\in J, \\ 1, & i\notin J.\end{cases}
\end{align}
We show that
\begin{align}
\label{incl}
\nu_\alpha s_1^{-1/p_{\alpha,1}} \dots s_d^{-1/p_{\alpha,d}} \hat x(\overline{s}) \in M;
\end{align}
then, by \eqref{xs_def}, \eqref{m1min} and \eqref{s_i}, we get \eqref{sup_x_m}.

In order to prove \eqref{incl} it suffices to check that, for each $\beta \in A$,
\begin{align}
\label{incl_ineq} \nu_\alpha \prod _{i=1}^d s_i^{1/p_{\beta,i}-1/p_{\alpha,i}} \le \nu_\beta.
\end{align}

We set 
\begin{align}
\label{d_def}
D=\{(1/p_1, \, \dots , \, 1/p_d)\in [0, \, 1]^d:\; p_j>q, \, j\in J, \, p_j<q, \, j\notin J\}.
\end{align}
From the definition of $J$ and generality of position it follows that $(1/p_{\alpha,1}, \, \dots , \, 1/p_{\alpha,d})\in D$.

We set
\begin{align}
\label{lam_def}
\lambda =\sup \Bigl\{\mu\in [0, \, 1]:\; \frac{1-\mu}{\overline{p}_\alpha}+ \frac{\mu}{\overline{p}_\beta} \in D\Bigr\}.
\end{align}
Notice that $\lambda>0$. We define $\overline{\theta}\in [1, \, \infty]^d$ by the equation $\frac{1}{\overline{\theta}} = \frac{1-\lambda}{\overline{p}_{\alpha}}+ \frac{\lambda}{\overline{p}_{\beta}}$. If $\lambda=1$, then $\frac{1}{\overline{p}_\beta}\in D$ (it follows from generality of position), and, by \eqref{phi_def}, \eqref{psi_def}, \eqref{m1min}, we get
$$
\nu_\alpha \prod _{i\in J} k_i^{1/q-1/p_{\alpha,i}} \le \nu_\beta \prod _{i\in J} k_i^{1/q-1/p_{\beta,i}}, 
$$
which implies \eqref{incl_ineq}. If $\lambda<1$, then $(\alpha, \, \beta) \in {\cal N}_2$ by \eqref{d_def}, \eqref{lam_def} and Definitions \ref{nm_def}, \ref{gen_pos_def}; from \eqref{lam_def} it follows that $\frac{1}{\overline{\theta}}\in \overline{D}$. This together with \eqref{m1min} implies that $\nu_\alpha \Phi(\overline{p}_\alpha, \overline{k}, \, q) \le \nu_\alpha^{1-\lambda}\nu_\beta^\lambda \Phi(\overline{\theta}, \overline{k}, \, q)$. From \eqref{phi_def} and \eqref{d_def} we get
$$
\nu_\alpha \prod _{i\in J} k_i^{1/q-1/p_{\alpha,i}} \le \nu_\alpha^{1-\lambda}\nu_\beta^\lambda \prod _{i\in J} k_i^{1/q-(1-\lambda)/p_{\alpha,i}-\lambda/p_{\beta,i}},
$$
which yields \eqref{incl_ineq}.

{\bf The case $m>1$.} In what follows, for brevity, we write $\lambda_j(\overline{\alpha}):=\lambda_j(\overline{\alpha}, \, I)$, $\overline{\theta}(\overline{\alpha}):= \overline{\theta}(\overline{\alpha}, \, I)$, where $I$ is the set from Definition \ref{nm_def}. We have (see \eqref{phi_def}, \eqref{psi_def})
\begin{align}
\label{min_mg1} \Psi(\{\overline{p}_{\alpha}\}_{\alpha\in A}, \, \overline{k}, \, q) = \nu_{\alpha_1}^{\lambda_1(\overline{\alpha})} \dots \nu_{\alpha_m}^{\lambda_m(\overline{\alpha})} \prod_{i=1}^d k_i^{(1/q-1/\theta_i(\overline{\alpha}))_+}.
\end{align}

Let $T_+=\{i\notin I:\; \theta_i( \overline{\alpha})>q\}$, $T_-=\{i\notin I:\; \theta_i( \overline{\alpha})<q\}$. By generality of position, $\{1, \, \dots, \, d\}\backslash I = T_+ \sqcup T_-$.

We define the vector $\overline{s}=(s_1, \, \dots, \, s_d)$ as follows. For $i \notin I$ we set
\begin{align}
\label{s_inii} s_i = \begin{cases} k_i, & i\in T_+, \\ 1, & i\in T_-.\end{cases}
\end{align}
For $i\in I$ the numbers $s_i$ are defined by the system of equations
\begin{align}
\label{s_i_syst} \left\{\begin{array}{l} \frac{\nu_{\alpha_2}}{\nu_{\alpha_1}} = \prod _{i\in T_+} k_i^{1/p_{\alpha_2,i}-1/p_{\alpha_1,i}} \prod _{i\in I} s_i^{1/p_{\alpha_2,i}-1/p_{\alpha_1,i}}, \\ \dots \\ \frac{\nu_{\alpha_m}}{\nu_{\alpha_1}} = \prod _{i\in T_+} k_i^{1/p_{\alpha_m,i}-1/p_{\alpha_1,i}} \prod _{i\in I} s_i^{1/p_{\alpha_m,i}-1/p_{\alpha_1,i}}. \end{array} \right.
\end{align}
By generality of position, the points $(1/\overline{p}_{\alpha_j})_I$ $(1\le j\le m)$ are affinely independent; hence the vectors $(1/p_{\alpha_j}-1/p_{\alpha_1})_I$ $(2\le j\le m)$ are linearly independent. Taking the logarithm in the equations \eqref{s_i_syst}, we see that the numbers $s_i$ ($i\in I$) are well-defined.

Let $\mu_{j,k}\ge 0$, $1\le j\le m$, $\sum \limits _{j=1}^m \mu_{j,k}=1$, $k=1, \, 2$. We define the vectors $\overline{\theta}_k$ and the numbers $\nu_{(k)}$ by the equations
\begin{align}
\label{1theta_k}
\frac{1}{\overline{\theta}_k} =\sum \limits _{j=1}^m \frac{\mu_{j,k}}{\overline{p}_{\alpha_j}}, \quad \nu_{(k)} = \prod _{j=1}^m \nu_{\alpha_j}^{\mu_{j,k}}, \quad k=1, \, 2.
\end{align}
From \eqref{s_i_syst} it follows that
\begin{align}
\label{nu1_nu2} \frac{\nu_{(1)}}{\nu_{(2)}} = \frac{\prod _{l\in T_+} k_l^{1/\theta_{1,l}} \prod _{l\in I} s_l^{1/\theta_{1,l}}}{\prod _{l\in T_+} k_l^{1/\theta_{2,l}} \prod _{l\in I} s_l^{1/\theta_{2,l}}} \stackrel{\eqref{s_inii}}{=}  \frac{\prod _{l=1}^d s_l^{1/\theta_{1,l}}}{\prod _{l=1}^d s_l^{1/\theta_{2,l}}}.
\end{align}

We show that $1\le s_i\le k_i$, $i\in \{1, \, \dots, \, d\}$. By \eqref{s_inii}, it suffices to consider the case $i\in I$.

We denote
\begin{align}
\label{sigma_def}
\Sigma = {\rm conv}\, (1/\overline{p}_{\alpha_j})_{1\le j\le m}.
\end{align}

Let $i\in I$. Consider the segment 
\begin{align}
\label{delta_otr}
\Delta=\{(\xi_l)_{1\le l\le m}\subset \Sigma:\; \xi_l=1/q, \, l\in I\backslash\{ i\}\}.
\end{align}
Its endpoints are $1/\overline{\theta}_k\in \Sigma$, $k=1, \, 2$. Since $\{(1/\overline{p}_{\alpha_j})_I\}_{j=1}^m$ are affinely independent and $\lambda_j(\overline{\alpha})>0$, $1\le j\le m$ (see Definition \ref{nm_def}), we have $\theta_{k,i}\ne q$, $k=1, \, 2$; i.e., $1/\overline{\theta}(\overline{\alpha})$ is the interior point of $\Delta$. Without loss of generality, 
\begin{align}
\label{theta12i}
1/\theta_{1,i}<1/q<1/\theta_{2,i}.
\end{align}

By \eqref{sigma_def}, there are $\mu_{j,k}\ge 0$, $1\le j\le m$, $k=1, \, 2$, such that $\sum \limits_{j=1}^m \mu_{j,k}=1$ and the first equality in \eqref{1theta_k} holds; the numbers $\nu_{(k)}$ are defined by the second equation of \eqref{1theta_k}. Further, there is $\lambda\in (0, \, 1)$ such that
\begin{align}
\label{theta_a_12}
\frac{1}{\overline{\theta}(\overline{\alpha})} = \frac{1-\lambda}{\overline{\theta}_1} + \frac{\lambda}{\overline{\theta}_2}.
\end{align}
By affine independence of $\{(1/\overline{p}_{\alpha_j})_I\}_{j=1}^m$, this yields that $\lambda_j(\overline{\alpha}) = (1-\lambda)\mu_{j,1}+\lambda \mu_{j,2}$; hence $\nu_{\alpha_1}^{\lambda_1(\overline{\alpha})} \dots \nu_{\alpha_m}^{\lambda_m(\overline{\alpha})} = \nu_{(1)}^{1-\lambda} \nu_{(2)}^\lambda$.

We set
$$
\mu_1= \inf \Bigr\{\mu\in [0, \, \lambda]:\; \frac{1-\mu}{\theta_{1,l}} + \frac{\mu}{\theta_{2,l}}< \frac 1q, \; l\in T_+, \; \frac{1-\mu}{\theta_{1,l}} + \frac{\mu}{\theta_{2,l}}> \frac 1q, \; l\in T_-\Bigl\},
$$
$$
\mu_2= \sup \Bigr\{\mu\in [\lambda, \, 1]:\; \frac{1-\mu}{\theta_{1,l}} + \frac{\mu}{\theta_{2,l}}< \frac 1q, \; l\in T_+, \; \frac{1-\mu}{\theta_{1,l}} + \frac{\mu}{\theta_{2,l}}> \frac 1q, \; l\in T_-\Bigl\}.
$$
Then $0\le \mu_1<\lambda<\mu_2\le 1$. We define the vectors $\overline{\sigma}_1$, $\overline{\sigma}_2$ by the equations
\begin{align}
\label{sigma_k_def}
\frac{1}{\overline{\sigma}_k} = \frac{1-\mu_k}{\overline{\theta}_1} + \frac{\mu_k}{\overline{\theta}_2}, \quad k = 1, \, 2.
\end{align}

From \eqref{phi_def}, \eqref{psi_def}, \eqref{min_mg1} and Definitions \ref{nm_def}, \ref{gen_pos_def} it follows that 
\begin{align}
\label{nu11lnu2lmut}
\nu_{(1)}^{1-\lambda} \nu_{(2)}^\lambda \prod _{l\in T_+} k_l^{1/q-1/\theta_l(\overline{\alpha})} \le \nu_{(1)} ^{1-\mu_t} \nu_{(2)}^{\mu_t} \Phi(\overline{\sigma}_t, \, \overline{k}, \, q), \quad t=1, \, 2.
\end{align}
From the definition of $\mu_t$ and \eqref{theta12i}, \eqref{sigma_k_def} it follows that $$\Phi(\overline{\sigma}_1, \, \overline{k}, \, q) = \prod _{l\in T_+} k_l^{1/q-1/\sigma_{1,l}} k_i^{1/q-1/\sigma_{1,i}}, \quad \Phi(\overline{\sigma}_2, \, \overline{k}, \, q) = \prod _{l\in T_+} k_l^{1/q-1/\sigma_{2,l}}.$$ This together with \eqref{theta_a_12}, \eqref{sigma_k_def}, \eqref{nu11lnu2lmut} and the equality $q=\theta_i(\overline{\alpha})$ (see Remark \ref{rem1}) implies that
$$
\frac{\nu_{(1)}^{\lambda-\mu_1}}{\nu_{(2)}^{\lambda-\mu_1}} \ge \prod_{l\in T_+} k_l^{(\lambda-\mu_1)(1/\theta_{1,l}-1/\theta_{2,l})} k_i^{(\lambda-\mu_1)(1/\theta_{1,i}-1/\theta_{2,i})},
$$
$$
\frac{\nu_{(1)}^{\mu_2-\lambda}}{\nu_{(2)}^{\mu_2-\lambda}} \le \prod_{l\in T_+} k_l^{(\mu_2-\lambda)(1/\theta_{1,l}-1/\theta_{2,l})}.
$$
Taking into account \eqref{nu1_nu2} and the equalities $\theta_{1,l}=\theta_{2,l} = q$ for $l\in I\backslash\{i\}$ (see \eqref{delta_otr}), we get
$$
\prod _{l\in T_+} k_l^{1/\theta_{1,l}-1/\theta_{2,l}} s_i ^
{1/\theta_{1,i}-1/\theta_{2,i}} \ge \prod_{l\in T_+} k_l^{1/\theta_{1,l}-1/\theta_{2,l}} k_i^{1/\theta_{1,i}-1/\theta_{2,i}},
$$
$$
\prod _{l\in T_+} k_l^{1/\theta_{1,l}-1/\theta_{2,l}} s_i ^
{1/\theta_{1,i}-1/\theta_{2,i}} \le \prod_{l\in T_+} k_l^{1/\theta_{1,l}-1/\theta_{2,l}};
$$
this together with \eqref{theta12i} yields the desired inequality $1\le s_i\le k_i$.

Now we prove that
$$ \nu_{\alpha_1}^{\lambda_1(\overline{\alpha})} \dots \nu_{\alpha_m}^{\lambda_m(\overline{\alpha})} s_1^{-1/\theta_1(\overline{\alpha})} \dots s_d^{-1/\theta_d(\overline{\alpha})}\hat x(\overline{s}) \in M;
$$
i.e. (see \eqref{xs_def}),
\begin{align}
\label{incl_mg1_ineq} \nu_{\alpha_1}^{\lambda_1 (\overline{\alpha})} \dots \nu_{\alpha_m}^{\lambda_m (\overline{\alpha})} s_1^{1/p_{\beta,1}-1/\theta_1 (\overline{\alpha})} \dots s_d^{1/p_{\beta,d}-1/\theta_d(\overline{\alpha})} \le \nu_\beta, \quad \beta \in A.
\end{align}
Then by \eqref{xs_def}, \eqref{min_mg1}, \eqref{s_inii} and the equality $\theta_i(\overline{\alpha})=q$ for $i\in I$ we get \eqref{sup_x_m}.

If $\beta \in \{\alpha_1, \, \dots, \, \alpha_m\}$, then \eqref{incl_mg1_ineq} follows from \eqref{nu1_nu2}.

Consider the case $\beta \notin \{\alpha_1, \, \dots, \, \alpha_m\}$.

\begin{Lem}
\label{conv_hull}
Let $\xi_1, \, \dots, \, \xi_m \in \R^{m-1}$ be affinely independent points, $\eta \in \R^{m-1}$. Suppose that for each $j\in \{1, \, \dots, \, m\}$ the points $\{\xi_1, \, \dots, \, \xi_{j-1}, \, \xi_{j+1}, \, \dots, \, \xi_m, \, \eta\}$ are affinely independent. We set $\Delta = {\rm conv}\, \{\xi_1, \, \dots, \, \xi_m\}$. Let $a\in {\rm int}\, \Delta$. Then there is $i\in \{1, \, \dots, \, m\}$ such that $a\in {\rm conv}\, \{\xi_1, \, \dots, \, \xi_{i-1}, \, \xi_{i+1}, \, \dots, \, \xi_m, \, \eta\}$.
\end{Lem}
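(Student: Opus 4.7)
The plan is to work in barycentric coordinates with respect to the frame $\xi_1, \dots, \xi_m$, which has $m$ affinely independent elements in $\R^{m-1}$ and so forms an affine basis. Each $x \in \R^{m-1}$ then admits a unique expansion $x = \sum_{j=1}^m \alpha_j \xi_j$ with $\sum_j \alpha_j = 1$. I will write $a = \sum_j \alpha_j \xi_j$ and $\eta = \sum_j \beta_j \xi_j$ and carry out the whole argument in terms of these coefficients.

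The key translations are: first, $a \in \mathrm{int}\,\Delta$ is equivalent to $\alpha_j > 0$ for every $j$; second, the affine independence hypothesis on $\{\xi_l : l \ne j\} \cup \{\eta\}$ is equivalent to $\beta_j \ne 0$, because the affine hull of $\{\xi_l : l \ne j\}$ is exactly the hyperplane $\{x : \alpha_j(x) = 0\}$ and affine dependence would place $\eta$ in it; third, membership $a \in \mathrm{conv}(\{\xi_l : l \ne i\} \cup \{\eta\})$ is the assertion that $a = \tau \eta + \sum_{l \ne i} \gamma_l \xi_l$ for some $\tau, \gamma_l \ge 0$ summing to $1$.

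With these translations, the construction is as follows. Since $\sum_j \beta_j = 1$, the set $J^+ = \{j : \beta_j > 0\}$ is nonempty; pick $i \in J^+$ minimizing the ratio $\alpha_j/\beta_j$ over $J^+$, and set $\tau = \alpha_i/\beta_i > 0$. The candidate representation is
$$
a = \tau \eta + \sum_{l \ne i} (\alpha_l - \tau \beta_l)\, \xi_l.
$$
A direct computation using $\tau \beta_i = \alpha_i$ together with $\sum_l \alpha_l = \sum_l \beta_l = 1$ shows that the right-hand side really equals $a$ and that its coefficients sum to $1$. Nonnegativity of $\alpha_l - \tau \beta_l$ is immediate for $l$ with $\beta_l < 0$ (since $\alpha_l > 0$ and $\tau > 0$), and follows from the minimality of $\tau$ for $l \in J^+ \setminus \{i\}$.

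The only subtle point — and the one that genuinely uses the full hypothesis — is the choice of $i$: the minimum must be taken over $J^+$ rather than over all indices, since a $j$ with $\beta_j < 0$ would yield $\tau \le 0$; the existence of at least one $j \in J^+$ is forced by $\sum_j \beta_j = 1 > 0$; and the absence of $\beta_j = 0$ (guaranteed by the affine independence assumption) keeps every ratio $\alpha_j/\beta_j$ well defined and the sign bookkeeping clean. Everything else is mechanical.
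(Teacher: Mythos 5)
Your proof is correct, and it goes by a genuinely different route than the paper's. The paper argues geometrically by induction on $m$: if $\eta\in{\rm int}\,\Delta$ it uses the subdivision of $\Delta$ into the simplices ${\rm conv}\,(\{\xi_l\}_{l\ne i}\cup\{\eta\})$, and otherwise it picks a facet hyperplane $L_j$ separating $\eta$ from $\Delta$, projects through $\xi_j$ onto that facet, and applies the induction hypothesis to the point $\zeta=L_j\cap[\xi_j,\eta]$. You instead work entirely in barycentric coordinates and exhibit the index $i$ explicitly: with $a=\sum_j\alpha_j\xi_j$ ($\alpha_j>0$, $\sum\alpha_j=1$) and $\eta=\sum_j\beta_j\xi_j$ ($\sum\beta_j=1$), choosing $i$ to minimize $\alpha_j/\beta_j$ over $J^+=\{j:\beta_j>0\}$ and setting $\tau=\alpha_i/\beta_i$ gives $a=\tau\eta+\sum_{l\ne i}(\alpha_l-\tau\beta_l)\xi_l$, and your verification of the affine identity, the coefficient sum, and nonnegativity (trivial when $\beta_l\le 0$, by minimality of the ratio when $\beta_l>0$) is complete — this is essentially a simplex-method ratio test, with no induction. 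One small remark: your closing comment overstates the role of the hypothesis $\beta_j\ne 0$; since you only take ratios over $J^+$ and the coefficients with $\beta_l\le 0$ are automatically $\ge\alpha_l>0$, your argument works verbatim even if some $\beta_j=0$, so in fact you prove a slightly stronger statement than the lemma (the extra affine-independence assumption involving $\eta$ is what the paper's inductive construction leans on, e.g.\ to ensure the points $\{\xi_l\}_{l\ne i,j}\cup\{\zeta\}$ stay affinely independent). The trade-off: your argument is shorter, explicit, and self-contained; the paper's subdivision picture makes the geometry more visible but at the cost of an induction and a more delicate setup.
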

\begin{proof}
If $\eta \in {\rm int}\, \Delta$, then $\Delta$ is divided into nonoverlapping simplices $${\rm conv}\, \{\xi_1, \, \dots, \, \xi_{i-1}, \, \xi_{i+1}, \, \dots, \, \xi_m, \, \eta\}, \quad i=1, \, \dots, \, m.$$

Let $\eta \notin {\rm int}\, \Delta$. Then $\eta \notin \Delta$. We prove the Lemma by induction on $m$. For $m=2$ it can be easily checked. Now we make the induction step from $m-1$ to $m$. Denote by $L_j$ ($j=1, \, \dots, \, m$) the affine span of $\xi_1, \, \dots, \, \xi_{j-1}, \, \xi_{j+1}, \, \dots, \, \xi_m$. There is $j$ such that $\Delta$ and $\eta$ lie in different half-spaces generated by the hyperplane $L_j$; in addition, $\eta \notin L_j$. We set $\zeta=L_j\cap [\xi_j, \, \eta]$, $\Delta_j={\rm conv}\, \{\xi_1, \, \dots, \, \xi_{j-1}, \, \xi_{j+1}, \, \dots, \, \xi_m\}$. The point $b\in {\rm int}\, \Delta_j$ is defined by the condition $a\in [b, \, \xi_j]$. For $i\in \{1, \, \dots, \, m\} \backslash \{j\}$ we denote $\Delta_{ij}= {\rm conv}\, (\{\xi_1, \, \dots, \, \xi_m\}\backslash \{\xi_i, \, \xi_j\})$, and by $L_{ij}$, the affine hull of $\{\xi_1, \, \dots, \, \xi_m\}\backslash \{\xi_i, \, \xi_j\}$. Notice that for each $i\in \{1, \, \dots, \, m\}\backslash \{j\}$ the points $\{\xi_l\}_{l\ne i, \, j}$, $\zeta$ are affinely independent; otherwise, $\zeta\in L_{ij}$ and $\eta \in L_i$, which contradicts the conditions of Lemma. By the induction hypothesis, there is $i\in \{1, \, \dots, \, m\} \backslash \{j\}$ such that $b\in {\rm conv}\, (\{\zeta\}\cup \Delta_{ij})$. Hence there is $c\in \Delta_{ij}$ such that $b\in [c, \, \zeta]$. This implies that $a \in {\rm conv}\, \{\zeta, \, c, \, \xi_j\}\subset {\rm conv}\, \{\eta, \, c, \, \xi_j\}\subset {\rm conv}\, \{\eta, \, \Delta_i\}$.
\end{proof}

By generality of position (see Definition \ref{gen_pos_def}) and Definition \ref{nm_def}, the simplex $\Sigma_I:=\{(\overline{\xi})_I:\; \overline{\xi}\in \Sigma\}$ and the point $(1/\overline{p}_\beta)_I$ satisfy the conditions of Lemma \ref{conv_hull}, and $(1/q, \, \dots, \, 1/q)$ is the interior point of $\Sigma_I$. Hence there is $i\in \{1, \, \dots, \, m\}$ such that $$(1/q, \, \dots, \, 1/q)\in {\rm conv}\, \{(1/\overline{p}_{\alpha_1})_I, \, \dots, \, (1/\overline{p}_{\alpha_{i-1}})_I, \, (1/\overline{p}_{\alpha_{i+1}})_I, \, \dots, \, (1/\overline{p}_{\alpha_m})_I, \, (1/\overline{p}_{\beta})_I\}.$$

Without loss of generality, $i=m$. We set $$\overline{\gamma} = (\gamma_1, \, \dots, \, \gamma_{m-1}, \, \gamma_m)=(\alpha_1, \, \dots, \, \alpha_{m-1}, \, \beta).$$ There are numbers $\mu_j> 0$, $j=1, \, \dots, \, m$, such that $\sum \limits _{j=1}^m \mu_j=1$ and
\begin{align}
\label{q_conv_beta} \frac{1}{\theta_l(\overline{\gamma})}:=\frac 1q = \sum \limits _{j=1}^m \frac{\mu_j}{p_{\gamma_j,l}}, \quad l\in I
\end{align}
(the strict positivity follows from the generality of position); hence $\overline{\gamma} \in {\cal N}_m$.
For $i\notin I$ we set $\frac{1}{\theta_l (\overline{\gamma})}:= \sum \limits _{j=1}^m \frac{\mu_j}{p_{\gamma_j,l}}$.

We define the numbers $t_i$, $i=1, \, \dots, \, d$, as follows. For $i\notin I$ we set 
\begin{align}
\label{t_inii} t_i = \begin{cases} k_i, & i \in T_+, \\ 1, & i\in T_-. \end{cases}
\end{align}
For $i\in I$ the numbers $t_i$ are defined by the system of equations
\begin{align}
\label{t_i_def} \left\{\begin{array}{l} \frac{\nu_{\alpha_2}}{\nu_{\alpha_1}} = \prod _{i\in T_+} k_i^{1/p_{\alpha_2,i}-1/p_{\alpha_1,i}} \prod _{i\in I} t_i^{1/p_{\alpha_2,i}-1/p_{\alpha_1,i}}, \\ \dots \\ \frac{\nu_{\alpha_{m-1}}}{\nu_{\alpha_1}} = \prod _{i\in T_+} k_i^{1/p_{\alpha_{m-1},i}-1/p_{\alpha_1,i}} \prod _{i\in I} t_i^{1/p_{\alpha_{m-1},i}-1/p_{\alpha_1,i}}, \\
\frac{\nu_\beta}{\nu_{\alpha_1}} = \prod _{i\in T_+} k_i^{1/p_{\beta,i}-1/p_{\alpha_1,i}} \prod _{i\in I} t_i^{1/p_{\beta,i}-1/p_{\alpha_1,i}}.
 \end{array}\right.
\end{align}
By generality of position, the points $(1/\overline{p}_{\alpha_1})_I, \, \dots$, $(1/\overline{p}_{\alpha_{m-1}})_I$, $(1/\overline{p}_{\beta})_I$ are affinely independent; hence the numbers $t_i$ are well-defined.

Let $\mu_{j,k}\ge 0$, $1\le j\le m$, $k=1, \, 2$, $\sum \limits _{j=1}^m \mu_{j,k}=1$. We define the vectors $\overline{\theta}_k$ and the numbers $\nu_{(k)}$ by the equations
$$
\frac{1}{\overline{\theta}_k} =\sum \limits _{j=1}^m \frac{\mu_{j,k}}{\overline{p}_{\gamma_j}}, \quad \nu_{(k)} = \prod _{j=1}^m \nu_{\gamma_j}^{\mu_{j,k}}.
$$
From \eqref{t_i_def} it follows that
\begin{align}
\label{nu1_nu21} \frac{\nu_{(1)}}{\nu_{(2)}} = \frac{\prod _{l\in T_+} k_l^{1/\theta_{1,l}} \prod _{l\in I} t_l^{1/\theta_{1,l}}}{\prod _{l\in T_+} k_l^{1/\theta_{2,l}} \prod _{l\in I} t_l^{1/\theta_{2,l}}} \stackrel{\eqref{t_inii}}{=} \frac{\prod _{l=1}^d t_l^{1/\theta_{1,l}}}{\prod _{l=1}^d t_l^{1/\theta_{2,l}}}.
\end{align}
In particular, this implies that \eqref{incl_mg1_ineq} is equivalent to
\begin{align}
\label{incl_mg1_ineq1} \begin{array}{c}\nu_{\alpha_1}^{\lambda_1 (\overline{\alpha})} \dots \nu_{\alpha_m}^{\lambda_m (\overline{\alpha})} s_1^{1/p_{\beta,1}-1/\theta_1 (\overline{\alpha})} \dots s_d^{1/p_{\beta,d}-1/\theta_d(\overline{\alpha})} \le \\ \le \nu_{\alpha_1}^{\mu_1} \dots \nu_{\alpha_{m-1}}^{\mu_{m-1}}\nu_\beta^{\mu_m} t_1^{1/p_{\beta,1}-1/\theta_1 (\overline{\gamma})} \dots t_d^{1/p_{\beta,d}-1/\theta_d(\overline{\gamma})}. \end{array}
\end{align}

We set 
$$
\omega = \sup \Bigl\{ \mu \in [0, \, 1]: \, \frac{1-\mu} {\theta_i(\overline{\alpha})} + \frac{\mu} {\theta_i(\overline{\gamma})} < \frac 1q, \; i\in T_+, \; \frac{1-\mu} {\theta_i(\overline{\alpha})} + \frac{\mu} {\theta_i(\overline{\gamma})} > \frac 1q, \; i\in T_-\Bigr\}.
$$
Then $\omega>0$.

We define the vector $\overline{\sigma} = (\sigma_1, \, \dots, \, \sigma_d)$ by the equation $\frac{1}{\overline{\sigma}} = \frac{1-\omega}{\overline{\theta}(\overline{\alpha})} + \frac{\omega}{\overline{\theta}(\overline{\gamma})}$.

If $\omega=1$, we get $\sigma_i>q$ for $i\in T_+$, $\sigma_i<q$ for $i\in T_-$ (by generality of position). If $\omega < 1$, one and only one of these inequalities becomes an equality, the remaining inequalities are strict; hence $(\alpha_1, \, \dots, \, \alpha_m, \, \beta) \in {\cal N}_{m+1}$ (see Definitions \ref{nm_def} and \ref{gen_pos_def}). In both cases, by \eqref{phi_def}, \eqref{psi_def}, \eqref{min_mg1},
$$
\nu_{\alpha_1}^{\lambda_1(\overline{\alpha})} \dots \nu_{\alpha_m}^{\lambda_m(\overline{\alpha})} \prod _{i\in T_+} k_i^{1/q-1/\theta_i(\overline{\alpha})} \le$$$$\le \nu_{\alpha_1}^{(1-\omega)\lambda_1(\overline{\alpha})+\omega\mu_1} \dots \nu_{\alpha_{m-1}}^{(1-\omega)\lambda_{m-1}(\overline{\alpha})+\omega\mu_{m-1}} \nu_{\alpha_m}^{(1-\omega)\lambda_m(\overline{\alpha})} \nu_\beta^{\omega \mu_m} \prod _{i\in T_+} k_i^{1/q-(1-\omega)/\theta_i(\overline{\alpha})-\omega/\theta_i(\overline{\gamma})};
$$
this yields
$$
\nu_{\alpha_1}^{\lambda_1(\overline{\alpha})} \dots \nu_{\alpha_m}^{\lambda_m(\overline{\alpha})} \prod _{i\in T_+} k_i^{1/q-1/\theta_i(\overline{\alpha})} \le \nu_{\alpha_1}^{\mu_1}\dots \nu_{\alpha_{m-1}} ^{\mu_{m-1}} \nu_\beta^{\mu_m} \prod _{i\in T_+} k_i^{1/q-1/\theta_i(\overline{\gamma})}.
$$
Taking into account \eqref{s_inii}, \eqref{t_inii} and the equalities $\frac{1}{\theta_l(\overline{\alpha})} = \frac{1}{\theta_l(\overline{\gamma})} = \frac 1q$, $l\in I$ (see \eqref{sum_paj_i_q}, \eqref{theta_a_def}, \eqref{q_conv_beta}), we get that the last inequality can be written as follows:
$$
\nu_{\alpha_1}^{\lambda_1(\overline{\alpha})} \dots \nu_{\alpha_m}^{\lambda_m(\overline{\alpha})} \prod _{i=1}^d s_i^{1/q-1/\theta_i(\overline{\alpha})} \le \nu_{\alpha_1}^{\mu_1}\dots \nu_{\alpha_{m-1}} ^{\mu_{m-1}} \nu_\beta^{\mu_m} \prod _{i=1}^d t_i^{1/q-1/\theta_i(\overline{\gamma})}.
$$
Dividing into $\nu_{\alpha_j}$ ($j=1, \, \dots, \, m-1$) and applying \eqref{nu1_nu2}, \eqref{nu1_nu21}, we get
$$
\prod _{i=1}^d s_i^{1/q-1/p_{\alpha_j,i}} \le \prod _{i=1}^d t_i^{1/q-1/p_{\alpha_j,i}}, \quad 1\le j\le m-1.
$$
Hence for all $\tau_1\ge 0, \, \dots, \, \tau_{m-1}\ge 0$ such that $\sum \limits _{j=1}^{m-1} \tau_j=1$ we have
$$
\prod _{i=1}^d s_i^{1/q-\sum \limits_{j=1}^{m-1}\tau_j/p_{\alpha_j,i}} \le \prod _{i=1}^d t_i^{1/q-\sum \limits_{j=1}^{m-1}\tau_j/p_{\alpha_j,i}};
$$
since $s_i=t_i$ for $i\notin I$ (see \eqref{s_inii}, \eqref{t_inii}), this is equivalent to
\begin{align}
\label{s_i_q_tau} \prod _{i\in I} s_i^{1/q-\sum \limits_{j=1}^{m-1}\tau_j/p_{\alpha_j,i}} \le \prod _{i\in I} t_i^{1/q-\sum \limits_{j=1}^{m-1}\tau_j/p_{\alpha_j,i}}.
\end{align}
Now we choose $\tau_1, \, \dots, \, \tau_{m-1}$ such that the vectors $\left(\frac{1}{\overline{p}_\beta} - \sum \limits_{j=1}^{m-1} \frac{\tau_j}{\overline{p}_{\alpha_j}}\right)_I$ and $\left(\frac{1}{\overline{q}} - \sum \limits_{j=1}^{m-1} \frac{\tau_j}{\overline{p}_{\alpha_j}}\right)_I$ are co-directional (here $\overline{q}=(q, \, \dots, \, q)$); it is possible since $$\Bigl(\frac{1}{\overline{q}}\Bigr)_I\in {\rm conv}\Bigl\{\Bigl(\frac{1}{\overline{p}_{\alpha_1}}\Bigr)_I, \, \dots, \, \Bigl(\frac{1}{\overline{p}_{\alpha_{m-1}}}\Bigr)_I, \, \Bigl(\frac{1}{\overline{p}_{\beta}}\Bigr)_I\Bigr\}.$$ Then from \eqref{s_i_q_tau} it follows that
$$
\prod _{i\in I} s_i^{1/p_{\beta,i}-\sum \limits_{j=1}^{m-1}\tau_j/p_{\alpha_j,i}} \le \prod _{i\in I} t_i^{1/p_{\beta,i}-\sum \limits_{j=1}^{m-1}\tau_j/p_{\alpha_j,i}};
$$
again, by \eqref{s_inii}, \eqref{t_inii}, it is equivalent to
$$
\prod _{i=1}^d s_i^{1/p_{\beta,i}-\sum \limits_{j=1}^{m-1}\tau_j/p_{\alpha_j,i}} \le \prod _{i=1}^d t_i^{1/p_{\beta,i}-\sum \limits_{j=1}^{m-1}\tau_j/p_{\alpha_j,i}}.
$$
This can be written as
$$
\prod_{i=1}^d s_i^{1/p_{\beta,i}-1/\theta_i(\overline{\alpha})}\prod_{j=1}^{m-1}\left(\prod_{i=1}^d s_i^{1/\theta_i(\overline{\alpha})-1/p_{\alpha_{j},i}}\right)^{\tau_j} \le \prod_{i=1}^d t_i^{1/p_{\beta,i}-1/\theta_i(\overline{\gamma})}\prod_{j=1}^{m-1}\left(\prod_{i=1}^d t_i^{1/\theta_i(\overline{\gamma})-1/p_{\alpha_{j},i}}\right)^{\tau_j}.
$$
Taking into account that, by \eqref{nu1_nu2}, \eqref{nu1_nu21}, the equalities $\prod_{i=1}^d s_i^{1/\theta_i(\overline{\alpha})-1/p_{\alpha_{j},i}} = \frac{\nu_{\alpha_1}^{\lambda_1(\overline{\alpha})}\dots \nu_{\alpha_m}^{\lambda_m(\overline{\alpha})}}{\nu_{\alpha_j}}$, $\prod_{i=1}^d t_i^{1/\theta_i(\overline{\gamma})-1/p_{\alpha_{j},i}} = \frac{\nu_{\alpha_1}^{\mu_1}\dots \nu_{\alpha_{m-1}}^{\mu_{m-1}}\mu_\beta^{\mu_m}}{\nu_{\alpha_j}}$ hold, we get
$$\nu_{\alpha_1}^{\lambda_1(\overline{\alpha})}\dots \nu_{\alpha_m}^{\lambda_m(\overline{\alpha})}
\prod_{i=1}^d s_i^{1/p_{\beta,i}-1/\theta_i(\overline{\alpha})} \le \nu_{\alpha_1}^{\mu_1}\dots \nu_{\alpha_{m-1}}^{\mu_{m-1}}\nu_\beta^{\mu_m}\prod_{i=1}^d t_i^{1/p_{\beta,i}-1/\theta_i(\overline{\gamma})};
$$
this completes the proof of \eqref{incl_mg1_ineq1}.

\section{The lower estimate in the general case}

Let us first prove a lower estimate for finite $A$, but without the condition of general position. We argue similarly as in \cite[\S 4]{vas_mix_sev}. First we notice that if $\overline{p}=(p_1, \, \dots,\,  p_d)$, $\overline{\theta}=(\theta_1, \, \dots,\,  \theta_d)\in [1, \, \infty]^d$, $|1/p_i-1/\theta_i|\le \frac{\log 2}{\log k}$, $i=1, \, \dots, \, d$, then $\frac 12 B^{\overline{k}}_{\overline{p}} \subset B^{\overline{k}}_{\overline{\theta}} \subset 2B^{\overline{k}}_{\overline{p}}$.

Let $\alpha \in A$, $\overline{p}_{\alpha}^N \in [1, \, \infty]^d$, $1/\overline{p}_{\alpha}^N \underset{N\to \infty}{\to} \overline{p}_{\alpha}$. For $\beta \ne \alpha$ we write $\overline{p}^N_{\beta} = \overline{p}_{\beta}$.
The sets ${\cal N}_m^N$, the numbers $\lambda_j^N(\overline{\alpha}, \, I)$ and the vectors $\overline{\theta}^N(\overline{\alpha}, \, I)$ are defined according to Definition \ref{nm_def} for $\{\overline{p}^N_{\beta}\}_{\beta \in A}$. We show that
$$
\Psi(\{\overline{p}^N_{\beta}\}_{\beta\in A}, \, \overline{k}, \, q) \underset{N\to \infty}{\to} \Psi(\{\overline{p}_{\beta}\}_{\beta\in A}, \, \overline{k}, \, q)
$$
(see \eqref{psi_def}). It follows from the assertions below:
\begin{enumerate}
\item The function $\Phi$ from \eqref{phi_def} is continuous in $1/\overline{p}$.

\item Let $\overline{\alpha} = (\alpha_1, \, \dots, \, \alpha_m)\in {\cal N}_m$, $\alpha_1=\alpha$. Then for sufficiently large $N$ we have $\overline{\alpha}\in {\cal N}_m^N$, $\lambda_j^N(\overline{\alpha}, \, I) \underset{N \to \infty}{\to} \lambda_j(\overline{\alpha}, \, I)$, $j=1, \, \dots, \, m$, $1/\overline{\theta}^N(\alpha, \, I) \underset{N \to \infty}{\to} 1/\overline{\theta}(\alpha, \, I)$.

\item Let $\overline{\alpha} = (\alpha_1, \, \dots, \, \alpha_m)\notin {\cal N}_m$, $\alpha_1=\alpha$. Suppose that there is a subsequence $\{N_j\}_{j\in \N}$ such that $\overline{\alpha} \in {\cal N}_m^{N_j}$, $j\in \N$, and the set $I$ from Definition \ref{nm_def} is the same for all $j$. Then there is a nonempty subset $J=\{i_1, \, \dots, \, i_s\} \subset \{1, \, \dots, \, m\}$ such that $\overline{\alpha}_J := (\alpha_{i_1}, \, \dots, \, \alpha_{i_s})\in {\cal N}_s$, $i_1=1$, and
$$
\nu_{\alpha_1}^{\lambda_1^{N_j}(\overline{\alpha}, \, I)} \dots \nu_{\alpha_m}^{\lambda_m^{N_j}(\overline{\alpha}, \, I)} \Phi (\overline{\theta}^{N_j}(\overline{\alpha}, \, I), \, \overline{k}, \, q) \underset{j\to \infty}{\to} \nu_{\alpha_{i_1}}^{\lambda _{i_1}(\overline{\alpha}_J, \, \tilde I)} \dots \nu_{\alpha_{i_s}}^{\lambda _{i_s}(\overline{\alpha}_J, \, \tilde I)} \Phi (\overline{\theta}(\overline{\alpha}_J, \, \tilde I), \, \overline{k}, \, q),
$$
where $\tilde I\subset I$ is from Definition \ref{nm_def} for $\overline{\alpha}_J \in {\cal N}_s$.
\end{enumerate}

It remains, by translating in succession the vectors $1/\overline{p}_{\alpha}$, $\alpha\in A$, to obtain a set of vectors in general position lying sufficiently close to $\{(1/\overline{p}_{\alpha})\}_{\alpha\in A}$.

A transition from a finite to an arbitrary set $A$ proceeds as in \cite[\S 5]{vas_mix_sev}.
\begin{Biblio}

\bibitem{galeev1} E.M.~Galeev, ``The Kolmogorov diameter of the intersection of classes of periodic
functions and of finite-dimensional sets'', {\it Math. Notes},
{\bf 29}:5 (1981), 382--388.

\bibitem{mal_rjut} Yu.V. Malykhin, K. S. Ryutin, ``The Product of Octahedra is Badly Approximated in the $l_{2,1}$-Metric'', {\it Math. Notes}, {\bf 101}:1 (2017), 94--99.

\bibitem{mal_rjut1} Yu.V. Malykhin, K.S. Ryutin, ``Widths and rigidity of unconditional sets and
random vectors'', {\it Izvestiya: Mathematics}, {\bf 89}:2 (2025) (to appear).

\bibitem{vas_ball_inters} A. A. Vasil'eva, ``Kolmogorov widths of intersections of finite-dimensional balls'', {\it J. Compl.}, {\bf 72} (2022), article 101649.

\bibitem{vas_mix_sev} A. A. Vasil'eva, ``Kolmogorov widths of an intersection of a family of balls in a mixed norm'', {\it J. Appr. Theory}, {\bf 301} (2024), article 106046.

\bibitem{vas_mix2} A.A. Vasil'eva, ``Estimates for the Kolmogorov widths of an intersection of two balls in a mixed norm'', {\it Sb. Math.}, {\bf 215}:1 (2024), 74--89.

\bibitem{vas_anisotr} A. A. Vasil'eva, ``Kolmogorov widths of anisotropic Sobolev classes'', arxiv:2406.02995.

\bibitem{pietsch1} A. Pietsch, ``$s$-numbers of operators in Banach space'', {\it Studia Math.},
{\bf 51} (1974), 201--223.

\bibitem{stesin} M.I. Stesin, ``Aleksandrov diameters of finite-dimensional sets
and of classes of smooth functions'', {\it Dokl. Akad. Nauk SSSR},
{\bf 220}:6 (1975), 1278--1281 [Soviet Math. Dokl.].

\bibitem{kashin_oct} B.S. Kashin, ``The diameters of octahedra'', {\it Usp. Mat. Nauk} {\bf 30}:4 (1975), 251--252 (in Russian).

\bibitem{bib_kashin} B.S. Kashin, ``The widths of certain finite-dimensional
sets and classes of smooth functions'', {\it Math. USSR-Izv.},
{\bf 11}:2 (1977), 317--333.

\bibitem{gluskin1} E.D. Gluskin, ``On some finite-dimensional problems of the theory of diameters'', {\it Vestn. Leningr. Univ.}, {\bf 13}:3 (1981), 5--10 (in Russian).

\bibitem{bib_gluskin} E.D. Gluskin, ``Norms of random matrices and diameters
of finite-dimensional sets'', {\it Math. USSR-Sb.}, {\bf 48}:1
(1984), 173--182.

\bibitem{garn_glus} A.Yu. Garnaev and E.D. Gluskin, ``On widths of the Euclidean ball'', {\it Dokl. Akad. Nauk SSSR}, {\bf 277}:5 (1984), 1048--1052 [Sov. Math. Dokl. 30 (1984), 200--204]

\bibitem{bib_ismag} R.S. Ismagilov, ``Diameters of sets in normed linear spaces and the approximation of functions by trigonometric polynomials'',
{\it Russian Math. Surveys}, {\bf 29}:3 (1974), 169--186.

\bibitem{itogi_nt} V.M. Tikhomirov, ``Theory of approximations''. In: {\it Current problems in
mathematics. Fundamental directions.} vol. 14. ({\it Itogi Nauki i
Tekhniki}) (Akad. Nauk SSSR, Vsesoyuz. Inst. Nauchn. i Tekhn.
Inform., Moscow, 1987), pp. 103--260 [Encycl. Math. Sci. vol. 14,
1990, pp. 93--243].

\bibitem{kniga_pinkusa} A. Pinkus, {\it $n$-widths
in approximation theory.} Berlin: Springer, 1985.

\bibitem{teml_book} V. Temlyakov, {\it Multivariate approximation}. Cambridge Univ. Press, 2018. 534 pp.

\bibitem{alimov_tsarkov} A.R. Alimov, I.G. Tsarkov, {\it Geometric Approximation Theory.} Springer Monographs in Mathematics, 2021. 508 pp.

\bibitem{galeev2} E.M. Galeev,  ``Kolmogorov widths of classes of periodic functions of one and several variables'', {\it Math. USSR-Izv.},  {\bf 36}:2 (1991),  435--448.

\bibitem{galeev5} E.M. Galeev, ``Kolmogorov $n$-width of some finite-dimensional sets in a mixed measure'', {\it Math. Notes}, {\bf 58}:1 (1995),  774--778.

\bibitem{izaak1} A.D. Izaak, ``Kolmogorov widths in finite-dimensional spaces with mixed norms'', {\it Math. Notes}, {\bf 55}:1 (1994), 30--36.

\bibitem{izaak2} A.D. Izaak, ``Widths of H\"{o}lder--Nikol'skij classes and finite-dimensional subsets in spaces with mixed norm'', {\it Math. Notes}, {\bf 59}:3 (1996), 328--330.

\bibitem{vas_besov} A. A. Vasil'eva, ``Kolmogorov and linear widths of the weighted Besov classes with singularity at the origin'', {\it J. Approx. Theory}, {\bf 167} (2013), 1--41.

\bibitem{dir_ull} S. Dirksen, T. Ullrich, ``Gelfand numbers related to structured sparsity and Besov space embeddings with small mixed smoothness'', {\it J. Compl.}, {\bf 48} (2018), 69--102.

\bibitem{bib_glus_3} E.D. Gluskin, ``Intersections of a cube with octahedron is badly approximated by subspaces of small dimension'' (in Russian), {\it Priblizhenie funktsii spetsialnymi klassami operatorov}, Mezhvuz. sb. nauch. tr., Min. pros. RSFSR, Vologodskii gos. ped. in-t, Vologda, 1987, 35--41.

\bibitem{mal} Yu. V. Malykhin, ``Widths and rigidity'', {\it Sb. Math.}, {\bf 215}:4 (2024), 543--571.

\end{Biblio}

\end{document}